\newtheorem{thm}{Theorem}[section]
\newtheorem{rems}[thm]{Remarks}
\newtheorem{deff}[thm]{Definition}
\numberwithin{equation}{section}
\newcommand{\CC}{\mathbb{C}}
\newcommand{\RR}{\mathbb{R}}
\newcommand{\cO}{\mathcal{O}}
\newcommand{\bfA}{\mathbf{A}}
\newcommand{\bfB}{\mathbf{B}}
\newcommand{\bfC}{\mathbf{C}}
\newcommand{\bfE}{\mathbf{E}}
\newcommand{\bfI}{\mathbf{I}}
\newcommand{\bfK}{\mathbf{K}}
\newcommand{\bfM}{\mathbf{M}}
\newcommand{\bfU}{\mathbf{U}}
\newcommand{\bfV}{\mathbf{V}}
\newcommand{\bfX}{\mathbf{X}}
\newcommand{\bfb}{\mathbf{b}}
\newcommand{\bfc}{\mathbf{c}}
\newcommand{\bfl}{\mathbf{l}}
\newcommand{\bfp}{\mathbf{p}}
\newcommand{\bfq}{\mathbf{q}}
\newcommand{\bfr}{\mathbf{r}}
\newcommand{\bfs}{\mathbf{s}}
\newcommand{\bfv}{\mathbf{v}}
\newcommand{\gz}{\zeta}
\newcommand{\gl}{\lambda}
\title{A New Class of General Linear Method with Inherent Quadratic Stability for Solving Stiff Differential Systems}
\author{
  Sakshi Gautam\,\orcidlink{0009-0007-3010-0853}$^{\ast}$ \and
  Ram K. Pandey\,\orcidlink{0000-0002-8176-8729}$^{\dagger}$
}
\address{$^{\ast,\dagger}$Department of Mathematics and Statistics, 
Dr.\ Harisingh Gour Vishwavidyalaya (A Central University),
Sagar, Pin–470001, Madhya Pradesh, India.}
\email{$^{\ast}$sakshi94nikhil@gmail.com}
\email{$^{\dagger}$pandeywavelet@gmail.com}
\keywords{Implicit general linear method; Nordsieck vector; Inherent quadratic stability; Error minimization; Stiff differential systems}
\subjclass{65L04,65L05,65L20,65L70}
\date{\today}
\begin{document}
\begin{abstract}
This article proposes a new class of general linear method (GLM) with $p=q$ and $r=s=p+1$. The construction of the present method is carried out using order conditions and error minimization subject to \emph{A}- stability constraints. The proposed time integration schemes are \emph{A}- and \emph{L}-stable GLMs equipped with inherent quadratic stability (IQS) criteria. We construct implicit GLMs of orders up to four with $p = q$ and $s = r$ along with the Nordsieck input vector assumption. Further, we test these schemes on three real-world problems: the van der Pol oscillator and two partial differential equations (Burgers' equation and the Gray-Scott model), and numerical results are presented. Computational results confirm that our proposed schemes are competitive with the existing GLMs and can be recognized as an alternative time integration scheme. We demonstrate the order of accuracy and convergence for the proposed schemes through observed order computation and error versus step size plots.
\end{abstract} 

\maketitle
\pagestyle{myheadings}
\markboth{\sc{Sakshi Gautam \& Ram K. Pandey}}{\sc{A new class of GLM with IQS for stiff systems}}
\section{Introduction}\label{sec1}
Many important physical and chemical process are described by partial differential equations (PDEs). Examples include transport phenomena, reaction-diffusion systems, and fluid dynamics. These PDEs, upon spatial discretization, result in large systems of ordinary differential equations (ODEs) \cite{Hundsdorfer2003}. However, these differential systems often exhibit complex phenomena that require a suitable time integrator to preserve the solution behavior. The system of time-dependent differential equations in the autonomous form is given by \cite{Butcher2016}
\begin{equation}\label{1.1}
\begin{cases}
    y^\prime=f(y(t)), \quad t\in[t_0,T],\\
    y(t_0)=y_0\in\RR^d,
\end{cases}
\end{equation}
where, $ f: \mathbb{R}^d \rightarrow \mathbb{R}^d,$ with the assumption that \eqref{1.1} is well-posed and has a unique solution.
In the present work, the partial differential equation governing physical phenomena such as advection-diffusion and reaction-diffusion has been considered. To apply time integration techniques to such a PDE, we need to reduce it to what a time integrator identifies, which is \eqref{1.1} using spatial discretization. But, in problems arising from real-world phenomena, this system is usually stiff, i.e., it requires a range of suitably small step sizes for explicit numerical solvers to obtain stable and convergent solutions, which in turn leads to large computational time and effort. To handle \eqref{1.1} when it exhibits stiffness, implicit solvers are typically employed. Therefore, choosing an appropriate time integrator becomes crucial \cite{Butcher2003}. General Linear Methods (GLMs) provide a flexible framework that includes Runge–Kutta (RK) and linear multistep methods (LMM) as special cases \citep{Butcher2016, Butcher1993}. Recent developments in GLMs have produced new classes that are not only accurate but also reliable for such problems \cite{Jaust2018}. 

The development of temporal integration schemes based on GLMs has already seen several advancements; GLM and its variants are used to solve a variety of different problems \cite{Ghahremani2025, Yu2025, Izzo2025}. For more on GLMs, see \cite{Jackiewicz2009, Cardone2011}, and the references cited therein. In recent years, Jackiewicz \cite{Jackiewicz2009} and Conte et al. \cite{Conte2010} built two-step RK and GLM versions with inherent quadratic stability (IQS). Cardone and co-authors \cite{Cardone2011, Cardone2012, Bras2013} studied IQS-based GLMs for non-stiff systems. Later, Bra\'s and his co-authors applied these ideas to stiff-problems \cite{Bras2011, Bras2014, Bras2014a}. More recently, efficient GLMs with quadratic stability (QS) functions have been constructed to lower the cost of stage computation where $s = p-1$ \cite{Bras2020}. However, the implicit GLMs with IQS for stiff problems for $s = p+1$ have not been constructed or analyzed.

This work aims to fill the gap in the literature for methods with $s=p+1$. Moreover, the $s=p+1$ design adds a stage in comparison to the $s = p$ structures in \cite{Bras2011}. However, the methods with $s=p$ are prone to having large error constants, but methods with $s=p+1$ has access to perhaps lower error constants \cite{Butcher2001}. This configuration with $r=s=p+1$ is very helpful in variable step size and variable order implementation as mentioned in \cite{Butcher2001}. The proposed methods remain competitive in accuracy, avoid order reduction, and demonstrate robustness across a range of test problems. We propose and examine GLM using IQS approaches that exhibit $A$-stability and $L$-stability, characterized by equal order and stage order ($p=q$) and with $r = s = p+1$. The choice of $r = p+1$ enables Nordsieck representation for the input/output vector. Constructing the IMEX equivalent of GLM with $p=q$ and $r=s=p+1$ with IQS as our future work is another justification behind the construction of these classes. We consider the construction and implementation of implicit classes of GLM with IQS. In addition, we present some example classes of the proposed schemes, along with computational results for three test problems that govern different physical phenomena.

The remainder of the article is structured as follows: Section \ref{sec3} presents our classes of implicit GLMs based on Nordsieck input vectors with IQS. Example classes of these methods for orders $p = 1, 2, 3$ and $4$ have been presented in Section \ref{sec4}. Further, Section \ref{sec5} demonstrates the applicability of the proposed classes to various modeled real-world problems, and the results are discussed in Section \ref{sec6}. Lastly, in Section \ref{sec7} we give the conclusion of the present work and future works in this direction.


\section{Construction of implicit GLM with IQS based on Nordsieck input vector}\label{sec3}  
For the initial value problem (IVP) given by \eqref{1.1}, the GLM class for one time step $t_n=t_{n-1}+h$ is given as follows \cite{Butcher2016, Jackiewicz2009}:
\begin{equation}\label{2.1.1}
\begin{aligned}
    Y_j^{[n]}=h\sum_{k=1}^{s}a_{jk}f(Y_k^{[n]})+ \sum_{k=1}^{r}u_{jk}y_k^{[n-1]},\quad j=1,2,\cdots,s,\\
    y_j^{[n]}=h\sum_{k=1}^{s}b_{jk}f(Y_k^{[n]})+ \sum_{k=1}^{r}v_{jk}y_k^{[n-1]},\quad j=1,2,\cdots,r, 
\end{aligned} 
\end{equation}
where, $\bfA=[a_{jk}]_{s\times s}$, $\bfU=[u_{jk}]_{s\times r}$,  $\bfB=[b_{jk}]_{r\times s}$ and  $\bfV=[v_{jk}]_{r\times r}$, are the coefficients of the method with $\bfc=[c_1,c_2,\cdots,c_s]^T$. The method $(\bfc,\bfA,\bfU,\bfB,\bfV)$ can be rewritten as
\begin{equation}\label{2.1.2}
\begin{aligned}
    Y^{[n]}=(\bfA\otimes \bfI) hf(Y^{[n]})+ (\bfU\otimes \bfI)y^{[n-1]},\\
    y^{[n]}=(\bfB\otimes \bfI) hf(Y^{[n]})+ (\bfV\otimes \bfI)y^{[n-1]},
\end{aligned} 
\end{equation}
where `$\otimes$' is the Kronecker product and $\bfI \in \RR^{d\times d}$. Here, the components $Y^{[n]}\in\RR^{sd}$ and $y^{[n]}\in\RR^{rd}$ are the internal and external stage approximations
\begin{equation}\label{inoutdef}
Y^{[n]}=  \left[
\begin{array}{c}
    Y_1^{[n]}\\
    Y_2^{[n]}\\
    \vdots\\
    Y_s^{[n]}
\end{array}
\right],\text{ } f(Y^{[n]})=  \left[
\begin{array}{c}
    f(Y_1^{[n]})\\
    f(Y_2^{[n]})\\
    \vdots\\
    f(Y_s^{[n]})
\end{array}
\right], \text{ and } y^{[n]}=  \left[
\begin{array}{c}
    y_1^{[n]}\\
    y_2^{[n]}\\
    \vdots\\
    y_r^{[n]}
\end{array}
\right].
\end{equation}
The matrices $\bfA$ and $\bfV$ for GLM \eqref{2.1.1} with ($r=s$) have the following structure 
\begin{equation}\label{IQSmatassume}
\bfA=
\begin{bmatrix}
    \gl & 0      & 0      & \cdots & 0 \\
    a_{21} & \gl & 0      & \cdots & 0 \\
    a_{31} & a_{32} & \gl & \cdots & 0 \\
    \vdots & \vdots & \vdots & \ddots & \vdots \\
    a_{r1} & a_{r2} & a_{r3} & \cdots & \gl
\end{bmatrix}, \quad \bfV=\begin{bmatrix}
    1      & v_{12} & v_{13} & \cdots & v_{1r} \\
    0      & 0      & v_{23} & \cdots & v_{2r} \\
    \vdots & \vdots & \vdots & \ddots & \vdots \\
    0 &0 &0 & \cdots & v_{r-1,r}  \\
    0      & 0      & 0      & \cdots & 0
\end{bmatrix}.
\end{equation}
Here, $\gl >0$ for the implicit classes \cite{Butcher1993, Butcher2001, Butcher1997} and $\bfV$ has this specific form with $v_{11}=1$, guaranteeing the zero stability and the power boundedness of the matrix $\bfV$ for the GLMs to be constructed \cite{Cardone2011, Bras2011}. \\

The internal and external stage approximations satisfy the following order assumptions \cite{Butcher2016, Jackiewicz2009, Cardone2011, Bras2020}.
\begin{itemize}
\item $Y^{[n]}$ is an approximation of stage order $q$, i.e.,\\\
\begin{equation}\label{2.1.3}
    Y^{[n]}= y(t_{n-1}+\bfc h)+\mathcal{O}(h^{q+1}),
\end{equation}
\item $y^{[n]}$ is an approximation of order $p$\\
\begin{equation}\label{2.1.4}
    y^{[n]}=\sum_{m=0}^{p}(\bfl_{m}\otimes \bfI) h^m y^{(m)}(t_{n})+\mathcal{O}(h^{p+1}).
    \end{equation}
  \item $y^{[n-1]}$ is also an approximation of order $p$\\
\begin{equation}\label{2.1.5}
    y^{[n-1]}=\sum_{m=0}^{p}(\bfl_{m}\otimes \bfI) h^m y^{(m)}(t_{n-1})+\mathcal{O}(h^{p+1}).  
\end{equation}
\end{itemize}
\begin{rems}
For the input vector in the Nordsieck form, the vectors $\bfl_m$ become the standard basis of  $\RR^r$, i.e., $\bfl_0=e_1,\bfl_1=e_2,\cdots,\bfl_p=e_r$ \cite{Bras2020}. 
\end{rems}
Here, we describe the construction of GLM with IQS, and this can be achieved using order conditions and the conditions obtained due to IQS imposition. The plan of the construction is divided under the following subheads. 
\subsection{Order conditions}\label{subsec3.1}
Assume that the GLM has input vector in the Nordsieck form, $y^{[n-1]} =  z(t_{n-1})+\mathcal{O}(h^{p+1})$  i.e., $y^{[n-1]}$ is an approximation of the Nordsieck vector of order $p$ with $y_j(t_{n-1})=h^{j-1}y^{(j-1)}(t_{n-1})+\mathcal{O}(h^{p+1})$ for $j=1,2,\ldots,r$. The Nordsieck vector of order $p=r-1$ is given as follows:
\begin{equation*}
z(t_{n-1})=\begin{bmatrix}
    y(t_{n-1})\\
    hy^\prime(t_{n-1})\\
    \vdots\\
    h^{r-1}y^{(r-1)}(t_{n-1})
\end{bmatrix}.
\end{equation*}
The order conditions required to construct the Nordsieck input vector-based GLM are given in the following theorem. 
\begin{thm}\label{The3}
\cite{Jackiewicz2009} A GLM $(\textbf{c},\textbf{A},\textbf{U},\textbf{B},\textbf{V})$ has order $p$ and stage order $q$ ($p=q=r-1$) with input vector in the Nordsieck form, if and only if
\begin{equation}\label{3.1.1}
    \begin{aligned}
        e^{\bfc z} & =z \bfA e^{\bfc z} +\bfU W+\mathcal{O}\left(z^{r}\right), \\
        e^{ z}  W & =z \bfB e^{\bfc z} +\bfV W+\mathcal{O}\left(z^{r}\right),
    \end{aligned}
\end{equation}
where $W=[1,z,\ldots,z^{r-1}]^T$. 
\end{thm}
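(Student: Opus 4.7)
The plan is to derive both identities by substituting the stage order and order assumptions into the two lines of the GLM \eqref{2.1.1}, expanding everything in Taylor series about $t_{n-1}$, and then recognizing the resulting coefficient matching conditions as the Taylor coefficients of the claimed matrix-valued generating function identities.

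First I would handle the internal stage equation. Using the Nordsieck form, the $k$-th component of $y^{[n-1]}$ is $h^{k-1} y^{(k-1)}(t_{n-1})+\mathcal{O}(h^{p+1})$, so $\bfU y^{[n-1]} = \sum_{k=0}^{p} \bfu^{(k+1)} h^{k} y^{(k)}(t_{n-1}) + \mathcal{O}(h^{p+1})$, where $\bfu^{(j)}$ denotes the $j$-th column of $\bfU$. By \eqref{2.1.3}, $Y^{[n]} = y(t_{n-1}+\bfc h)+\mathcal{O}(h^{q+1})$, so $hf(Y^{[n]}) = h y'(t_{n-1}+\bfc h)+\mathcal{O}(h^{q+1})$. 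Expanding $y(t_{n-1}+\bfc h)$ and $h y'(t_{n-1}+\bfc h)$ in Taylor series around $t_{n-1}$ and substituting into the first line of \eqref{2.1.1}, I would then match the coefficients of $h^k y^{(k)}(t_{n-1})$ for $k=0,1,\ldots,p$, obtaining the family of conditions
\begin{equation*}
\frac{\bfc^{k}}{k!} = \frac{\bfA\bfc^{k-1}}{(k-1)!} + \bfu^{(k+1)}, \qquad k=0,1,\ldots,p,
\end{equation*}
with the convention that the $\bfA$ term vanishes when $k=0$. Multiplying the $k$-th equation by $z^k$ and summing over $k$ collapses the left-hand side into $e^{\bfc z}$, the stage term into $z\bfA e^{\bfc z}$, and the $\bfU$-sum into $\bfU W$; the neglected remainders are $\mathcal{O}(z^{r})$ since $p=r-1$, giving the first identity.

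The second identity is obtained analogously from the output equation, with the extra ingredient being the change of expansion point from $t_n$ to $t_{n-1}$. From \eqref{2.1.4}, the $j$-th component of $y^{[n]}$ satisfies $y_j^{[n]} = h^{j-1} y^{(j-1)}(t_{n})+\mathcal{O}(h^{p+1})$. Using $y^{(j-1)}(t_n) = \sum_{i\ge 0} \frac{h^i}{i!} y^{(j-1+i)}(t_{n-1})$ to push everything back to $t_{n-1}$, I would match coefficients of $h^k y^{(k)}(t_{n-1})$ in the second line of \eqref{2.1.1} for $k=0,\ldots,p$. A short bookkeeping step shows that the coefficient extracted from $y^{[n]}$ corresponds to the $k$-th coefficient in the Taylor expansion of $e^{z} W$, while the right-hand side produces $z \bfB e^{\bfc z} + \bfV W$ by the same generating-function argument used above.

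For the converse direction, I would simply run the same coefficient identification backwards: the two generating function identities, truncated at order $z^{r-1}$, give exactly the algebraic conditions on $\bfA,\bfU,\bfB,\bfV,\bfc$ that force \eqref{2.1.3}–\eqref{2.1.5} to hold with $p=q=r-1$ for arbitrary smooth $y$. The main obstacle I anticipate is not the algebra itself but the careful bookkeeping in the second equation: the shift from $t_n$ to $t_{n-1}$ must be tracked term by term so that the infinite sum telescopes cleanly into the factor $e^z$ multiplying $W$, and one has to verify that all terms discarded during this rearrangement are genuinely absorbed into the $\mathcal{O}(z^r)$ remainder, which in turn uses $p=q=r-1$ to keep the truncation orders consistent on both sides.
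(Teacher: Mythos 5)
The paper does not actually prove Theorem \ref{The3}: it states ``the proof is trivial'' and defers entirely to \cite{Butcher1993, Jackiewicz2009}. Your sketch reconstructs exactly the standard argument from those references --- substitute \eqref{2.1.3}--\eqref{2.1.5} into \eqref{2.1.1}, Taylor-expand about $t_{n-1}$, match coefficients of $h^k y^{(k)}(t_{n-1})$ for $k=0,\dots,r-1$, and resum the resulting conditions $\bfc^k/k! = \bfA\bfc^{k-1}/(k-1)! + \bfu^{(k+1)}$ and their output-equation analogues into the generating-function form --- and your bookkeeping for the shift $t_n\to t_{n-1}$ producing the factor $e^{z}$ in front of $W$ is correct. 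The one step you elide is in the converse: because the first line of \eqref{2.1.1} is implicit in $Y^{[n]}$, showing that the algebraic conditions imply \eqref{2.1.3} requires more than reversing the coefficient matching --- the matching only shows that $y(t_{n-1}+\bfc h)$ satisfies the stage equation up to an $\mathcal{O}(h^{q+1})$ residual, and one still needs a fixed-point or implicit-function argument (for $h$ sufficiently small, using the Lipschitz continuity of $f$) to conclude that the actual solution $Y^{[n]}$ of the implicit system differs from $y(t_{n-1}+\bfc h)$ by $\mathcal{O}(h^{q+1})$; this is standard but should be stated.
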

\begin{proof}
The proof is trivial and is given in \cite{Butcher1993, Jackiewicz2009}.
\end{proof}

\subsection{Imposition of inherent quadratic stability and minimum error constant}\label{subsec3.2}
Consider the linear test problem as given below
\begin{equation}\label{testp}
y^{\prime}(t)=\gz y(t), \quad t \geq 0,\quad \gz \in \CC,
\end{equation}
for $ \Re(\gz)<0$ and $|\gz|$ is the stiffness parameter.\\
We apply the method \eqref{2.1.2} to the test problem \eqref{testp} and arrive at the vector recurrence relation for one step of the GLM
\begin{equation}\label{stabrec}
y^{[n]}=\bfM(\omega) y^{[n-1]}, \text{ where } \omega=h\gz.
\end{equation}
The stability matrix $\bfM(\omega)$ \cite{Cardone2011, Cardone2012} is defined by
\begin{equation}\label{stabmat}
\bfM(\omega)=\bfV+ \omega \bfB(\bfI-\omega \bfA)^{-1} \bfU,
\end{equation}
and define $\hat{p}(\eta,\omega)$, the stability function as the characteristic polynomial of the matrix $\bfM(\omega)$, i.e.,
\begin{equation}
\hat{p}(\eta,\omega)= \operatorname{det}(\eta \bfI-\bfM(\omega)).
\end{equation}
As $\hat{p}(\eta,\omega)$ is a polynomial in $\eta$ with rational coefficients in $\omega$, so for simplification, we multiply $\hat{p}(\eta,\omega)$ by the factor $(1-\lambda \omega)^r$. The resulting stability function is denoted by $p(\eta, \omega)$ and is given by
\begin{equation}\label{stabfun}
p(\eta, \omega)=(1-\lambda \omega)^r \hat{p}(\eta,\omega),
\end{equation}
where, $\eta \in \mathbb{C}$.
In this article, we search for numerical methods that assume a stability function of the form given as
\begin{equation}\label{2.2.4}
p(\eta,\omega)=\eta^{r-2}((1-\lambda \omega)^r\eta^2-p_1(\omega)\eta+p_0(\omega)),
\end{equation}
where, $p_1(\omega)$ and $p_0(\omega)$ are polynomials with degree $r$. If the characteristic function of the underlying stability matrix $\bfM(\omega)$ has the form \eqref{2.2.4}, then the GLM $(\bfc,\bfA,\bfU,\bfB,\bfV)$ has a quadratic stability function. For a more detailed description of the stability analysis, see \cite{Butcher2016, Jackiewicz2009, Huang2005, Wright2002}.

To analyze GLM with IQS, first define a special matrix equivalence relation  $\textbf{E}\equiv\textbf{F}$ if $\textbf{E}=\textbf{F}$ except for the first two rows. To ensure the quadratic stability of GLM $(\textbf{c},\textbf{A},\textbf{U},\textbf{B},\textbf{V})$, we provide the IQS conditions in the following definition.
\begin{deff}\label{def1}
\cite{Cardone2011, Conte2010, Bras2011, Bras2014, Bras2012} The GLM $(\textbf{c},\textbf{A},\textbf{U},\textbf{B},\textbf{V})$ with $r=p+1$, possess inherent quadratic stability if there exist a matrix $\textbf{X}$ such that:
\begin{equation}\label{IQScondition}
    \textbf{BA}\equiv\textbf{XB} \quad\text{and}\quad \textbf{BU}\equiv\textbf{XV}-\textbf{VX}. 
\end{equation}
\end{deff}

This matrix equivalence relation for the so-called ``inherent quadratic stability (IQS)" condition defined in \eqref{IQScondition} is a sufficient condition for obtaining the stability polynomial in the form given by \eqref{2.2.4}.

We use the order conditions as defined in Theorem \ref{The3} and determine the coefficients for the matrix $\bfU$ and for the $r-1$ columns of $\bfB$. Now, for determining the rest of the parameters, we impose the algebraic conditions of IQS as defined in \eqref{IQScondition} with the assumption that the matrix $\bfX$ has the following form \cite{Cardone2012}
\begin{equation}\label{3.2.3}
\bfX = \begin{bmatrix}
    x_{11} & x_{12} & \cdots & x_{1,r-1} & x_{1r} \\
    x_{21} & x_{22} & \cdots & x_{2,r-1} & x_{2r} \\
    0      & 1      & \cdots & 0         & x_{3r} \\
    \vdots & \vdots & \ddots & \vdots    & \vdots \\
    0      & 0      & \cdots & 1         & x_{rr} \\
\end{bmatrix}.
\end{equation}
Solving the IQS conditions \eqref{IQScondition} for the $2(r-2)$ rows, we arrive at the desired stability function for GLM with IQS. The following theorem verifies the assertion.
\begin{thm}
 If the GLM $(\bfc,\bfA,\bfU,\bfB,\bfV)$ of order and stage order $p$ with Nordsieck input vector satisfies the order conditions \eqref{3.1.1} and IQS conditions \eqref{IQScondition} for the structure of $\bfX$ (given in \eqref{3.2.3}), then the stability polynomial takes the following form
\begin{equation}\label{3.2.4}
    p(\eta,\omega)=\eta^{r-2}((1-\gl \omega)^r\eta^2-p_1(\omega)\eta+p_0(\omega)),
\end{equation}
where,
\begin{equation}\label{3.2.5}
    \begin{aligned}
        &p_1(\omega)=1+\omega p_{11}+\omega^2p_{12}+\cdots+\omega^rp_{1r},\\
        &p_0(\omega)=\omega p_{01}+\omega^2p_{02}+\cdots+\omega^rp_{0r}.\\
    \end{aligned}
\end{equation}
\end{thm}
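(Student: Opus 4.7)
The plan is to use the IQS identities to show that, after left-multiplication by $\bfP := \bfI - \omega\bfX$, the stability matrix $\bfM(\omega)$ and the matrix $\bfV$ satisfy $\bfP\bfM(\omega) = \bfV\bfP + \omega\bfPhi(\omega)$ where $\bfPhi(\omega)$ has nonzero entries only in its first two rows. Once this reduction is in hand, the triangular tail of $\eta\bfI - \bfV$ prescribed by \eqref{IQSmatassume} forces $\eta^{r-2}$ to factor out of the characteristic polynomial, leaving a quadratic remainder in $\eta$; the prefactor $(1-\gl\omega)^r$ in \eqref{stabfun} then clears the denominator introduced by $(\bfI-\omega\bfA)^{-1}$.

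First I would rewrite the IQS conditions \eqref{IQScondition} in exact (non-equivalence) form as $\bfB\bfA = \bfX\bfB + \bfR_1$ and $\bfB\bfU = \bfX\bfV - \bfV\bfX + \bfR_2$, where $\bfR_1$ and $\bfR_2$ have nonzero entries only in their first two rows. Starting from $\bfM(\omega) = \bfV + \omega\bfB(\bfI-\omega\bfA)^{-1}\bfU$ and using the derived identity $(\bfI - \omega\bfX)\bfB = \bfB(\bfI - \omega\bfA) + \omega\bfR_1$ to propagate through $(\bfI-\omega\bfA)^{-1}$, then substituting the $\bfB\bfU$-identity so that $\omega\bfX\bfV$ telescopes with $-\omega\bfX\bfV$, one obtains
\begin{equation*}
\bfP\bfM(\omega) = \bfV\bfP + \omega\bfPhi(\omega), \qquad \bfPhi(\omega) := \bfR_2 + \omega\bfR_1(\bfI - \omega\bfA)^{-1}\bfU.
\end{equation*}
Since right-multiplication preserves the row support of $\bfR_1$ and $\bfR_2$, the matrix $\bfPhi(\omega)$ has nonzero entries only in rows 1 and 2.

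Next I would rearrange to $\bfP(\eta\bfI - \bfM(\omega)) = \bigl((\eta\bfI - \bfV) - \omega\bfPhi(\omega)\bfP^{-1}\bigr)\bfP$, and after cancelling the nonzero factor $\det\bfP$ (treating the identity generically in $\omega$) obtain $\det(\eta\bfI - \bfM(\omega)) = \det(\eta\bfI - \bfV - \bfPsi(\omega))$ with $\bfPsi(\omega) := \omega\bfPhi(\omega)\bfP^{-1}$ still supported in rows 1 and 2 only. By \eqref{IQSmatassume} the last $r-2$ rows of $\eta\bfI - \bfV - \bfPsi(\omega)$ coincide with those of $\eta\bfI - \bfV$: row $i$ (for $i \geq 3$) carries $\eta$ in column $i$, the entries $-v_{i,i+1}, \ldots, -v_{ir}$ to the right, and zeros to the left. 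Expanding the determinant successively along rows $r, r-1, \ldots, 3$ --- at each stage the last row of the current leading block contains only a single entry $\eta$ in its rightmost column --- pulls out exactly $r - 2$ factors of $\eta$ and leaves the $2 \times 2$ upper-left minor, whose determinant is quadratic in $\eta$ with rational coefficients in $\omega$.

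Finally, since $\bfA$ is lower-triangular with $\gl$ on its diagonal, $\det(\bfI-\omega\bfA) = (1-\gl\omega)^r$ and every entry of $\bfM(\omega)$ is rational in $\omega$ with denominator $(1-\gl\omega)^r$; multiplying by $(1-\gl\omega)^r$ as in \eqref{stabfun} converts the quadratic remainder into $(1-\gl\omega)^r\eta^2 - p_1(\omega)\eta + p_0(\omega)$ with $p_1, p_0$ polynomials in $\omega$ of degree at most $r$. Evaluating at $\omega = 0$ one has $\bfM(0) = \bfV$ and $\det(\eta\bfI - \bfV) = \eta^{r-1}(\eta - 1) = \eta^{r-2}(\eta^2 - \eta)$, which pins down the constants $p_1(0) = 1$ and $p_0(0) = 0$ matching \eqref{3.2.5}. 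The main obstacle is the first step: the row-support of $\bfR_1$ and $\bfR_2$ must be preserved throughout the manipulation, which relies on both residuals appearing only to the left of right-multiplications --- they are never left-multiplied by $\bfX$, whose subdiagonal-plus-last-column pattern in \eqref{3.2.3} would otherwise smear their support into a third row and destroy the factorisation.
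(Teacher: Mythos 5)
Your argument is correct and follows essentially the same route as the paper: left-multiplying $\bfM(\omega)$ by $\bfI-\omega\bfX$, using the IQS identities to show the result agrees with $\bfV(\bfI-\omega\bfX)$ outside the first two rows (your $\eta\bfI-\bfV-\bfPsi(\omega)$ is exactly the paper's conjugate $\eta\bfI-\tilde{\bfM}(\omega)$), and then exploiting the strictly upper-triangular tail of $\bfV$ to extract the factor $\eta^{r-2}$. Your explicit residual matrices $\bfR_1,\bfR_2$ merely unpack the paper's equivalence relation $\equiv$, and your evaluation at $\omega=0$ is a small welcome addition that pins down $p_1(0)=1$ and $p_0(0)=0$, which the paper's proof leaves implicit.
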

\begin{proof}
The proof is done along the lines of \cite{Cardone2011, Bras2011}. We start with the method of order $p$ satisfying \eqref{3.1.1}. Consider its stability matrix $\bfM(\omega)$ given in \eqref{stabmat}, and right multiply it by $(\bfI-\omega\bfX)$.
Assuming $(\bfI-\omega\bfX)$ is non singular and that the matrix equivalence \eqref{IQScondition} holds, \eqref{stabmat} becomes
\begin{equation*}
    \begin{aligned}
        &(\bfI-\omega\bfX)\bfM(\omega)=(\bfI-\omega\bfX)\bfV+\omega (\bfI-\omega\bfX)\bfB(\bfI-\omega \bfA)^{-1} \bfU\\
        &\hspace{2.35cm}=(\bfI-\omega\bfX)\bfV+(\bfB-\omega\bfX\bfB)(\bfI-\omega \bfA)^{-1}\bfU \\
        &\hspace{2.35cm}\equiv (\bfI-\omega\bfX)\bfV+(\bfB-\omega\bfB\bfA)(\bfI-\omega \bfA)^{-1}\bfU \\
        &\hspace{2.35cm}\equiv (\bfI-\omega\bfX)\bfV+\bfB(\bfI-\omega\bfA)(\bfI-\omega \bfA)^{-1}\bfU \\
        &\hspace{2.35cm}\equiv \bfV-(\omega\bfX\bfV-\bfB\bfU) \\
        &\hspace{2.35cm}\equiv \bfV-\omega\bfV\bfX. \\
    \end{aligned}
\end{equation*}  
Therefore, $(\bfI-\omega\bfX)\bfM(\omega)(\bfI-\omega\bfX)^{-1} \equiv  \bfV$. Let 
$(\bfI-\omega\bfX)\bfM(\omega)(\bfI-\omega\bfX)^{-1}=\tilde{\bfM}(\omega)$ defined using block matrices 
$\tilde{\bfM}_{11}(\omega)\in \RR^{2\times 2}$, 
$\tilde{\bfM}_{12}(\omega)\in \RR^{2\times (r-2)}$, 
$\tilde{\bfM}_{22}(\omega)\in \RR^{(r-2)\times (r-2)}$, 
and $\bf0 \in \RR^{(r-2)\times 2}$ i.e.,
\begin{equation*}
\tilde{\bfM}(\omega)=	\begin{bmatrix}
    \tilde{\bfM}_{11}(\omega) & \tilde{\bfM}_{12}(\omega)\\
    \bf0 & \tilde{\bfM}_{22}(\omega)
\end{bmatrix}.
\end{equation*}
Also define $\bfV$ in the same block format with 
$\tilde{\bfV}_{11}\in \RR^{2\times 2}$, 
$\tilde{\bfV}_{12}\in \RR^{2\times (r-2)}$, 
and $\tilde{\bfV}_{22}\in \RR^{(r-2)\times (r-2)}$, we have 
\begin{equation*}
\bfV=\begin{bmatrix}
    \tilde{\bfV}_{11} & \tilde{\bfV}_{12}\\
    \bf0 & \tilde{\bfV}_{22}
\end{bmatrix}.
\end{equation*}
Since $\tilde{\bfM}(\omega) \equiv \bfV$ except for the first two rows, we have
$\tilde{\bfM}_{22}(\omega)=\tilde{\bfV}_{22}$, 
and therefore $\tilde{\bfM}_{22}(\omega)$ is a strictly upper triangular matrix with eigen values $0$ of multiplicity $r-2$ and the stability function of the matrix $\bfM(\omega)$ becomes
\begin{equation}\label{reqstab}
\det(\eta \bfI-\bfM(\omega))=\eta^{r-2}\det(\eta \tilde{\bfI}-\tilde{\bfM}_{11}(\omega)),
\end{equation}
where, $\tilde{\bfI}\in \RR^{2\times 2}$ and \eqref{reqstab} gives the required stability function $p(\eta,\omega)$ of the form \eqref{3.2.4}.
\end{proof}
We search for implicit methods that are $A$-stable as well as $L$-stable, and possibly have a small error constant. For the stability function $p(\eta,\omega)$ given by \eqref{3.2.4} to be $L-$stable, the coefficients $p_{1r}$, and $p_{0r}$ in \eqref{3.2.5} must satisfy the following:    
\begin{equation}\label{3.2.6}
\begin{aligned}
    \lim_{\omega\to \infty}\frac{p_1(\omega)}{(1-\gl \omega)^r}=0,\quad \lim_{\omega\to \infty}\frac{p_0(\omega)}{(1-\gl \omega)^r}=0,
\end{aligned}
\end{equation}
which is equivalent to $p_{1r}=0$ and $p_{0r}=0$.

Our strategy in this paper is first to employ the $L$-stability criteria given by \eqref{3.2.6} to reduce the number of parameters to compute. 
Further, we define the local error at time $t_n$, which will then be used as an objective in the minimization problem for determining the remaining method coefficients.

Assuming $y_1^{[n-1]}= y(t_{n-1})+\cO(h^{p+1})$, and $y_k^{[n-1]}= h^{k-1}y^{(k-1)}(t_{n-1})+\cO(h^{p+1})$ for $k=2,3,\cdots,r$, the error propagation in one step of the GLM scheme is given as follows \cite{Cardone2011, Cardone2012}:
\begin{equation}\label{erroreq}
y(t_n)-y_1^{[n]}=\left(\frac{1}{(p+1)!}-\frac{\bfb^T \bfc^p}{p!}+\bfv^T\beta \right)h^{p+1}y^{(p+1)}(t_n)+\cO(h^{p+2})
\end{equation}
where, 
\begin{equation*}
\beta=(I-\tilde{V})^{-1}\left(\left[\frac{1}{p!},\frac{1}{(p-1)!},\cdots,1\right]^T-\frac{\tilde{B} \bfc^p}{p!}\right). 
\end{equation*}
Here, the matrices $\bfb^T$, $\bfv^T$, $\tilde{B}$ and $\tilde{V}$ are submatrices of the matrix $\bfB$ and $\bfV$ and are defined as follows \cite{Jackiewicz2009, Cardone2011, Cardone2012}:
\begin{equation*}
\bfB  =\left[
\begin{array}{c}
\bfb^T \\
\tilde{B}
\end{array}
\right], \text { and } \bfV =\left[\begin{array}{cc}
1 & \bfv^T \\
0 & \tilde{V}
\end{array}
\right].
\end{equation*} 
To ensure $A-$stability, we impose the Schur criterion on the eigenvalues of the stability matrix $\bfM(\omega)$, requiring that all eigenvalues lie inside or on 
the unit disk for $\Re(\omega) \leq 0$. This yields a system of nonlinear constraints in terms of the free parameters of $\bfV$ and $\gl$.
The error constant is the leading term in \eqref{erroreq} defined as
\begin{equation}\label{eq:error_constant}
E_{r} = 
\left| 
    \frac{1}{(p+1)!} 
    - \frac{\bfb^{T}\bfc^{p}}{p!} 
    + \bfv^{T}\beta 
\right|.
\end{equation}
After applying the Schur criteria, let $\mathcal{F}_{\text{Schur}}$ be the feasible region for the $A-$stability, which contains the possible range of values for the free parameters of the method coefficients that can be optimally solved by minimizing the error constant \eqref{eq:error_constant}
\begin{equation}\label{eq:optimization}
\begin{aligned}
&\min_{\text{free parameters}} \, E_{r}\\
&\text{subject to} \quad \mathcal{F}_{\text{Schur}}.
\end{aligned}
\end{equation}
We now present the example classes of constructed implicit GLMs of order up to $p=4$ in the following section.

\begin{algorithm}[h]
\caption{Workflow for construction of an Implicit GLM with IQS and $r=s=p+1$}
\label{alg:GLMcompact}
\begin{algorithmic}[1]
\State \textbf{Input} $p$ (order of method), $s=p+1$ (no. of internal stages), and $r=p+1$ (no. of external stages).
\State \textbf{Define symbolic matrices:}
Initialize the coefficient matrices $\bfA \in \RR^{s\times s}$, $\bfU\in \RR^{s\times r}$, 
        $\bfB\in \RR^{r\times s}$, $\bfV\in \RR^{r\times r}$, and abscissae $\bfc\in\RR^{s}$ with $\bfA$ and $\bfV$ as given in \eqref{IQSmatassume}.

\State \textbf{Choose abscissae:}  
      Define $\bfc=[c_1,\dots,c_{s}]^T$ with
      \[
      c_i=\frac{i-1}{\,s-1\,}, \qquad i=1,\dots,s.
      \]
      Construct the Vandermonde matrix of the abscissae $\bfc$
      \[
      \bfC_r = 
      \begin{bmatrix}
      \mathbf{1} & \bfc & \bfc^2/2! & \dots & \bfc^{\,r-1}/(r-1)!
      \end{bmatrix}
      \in \RR^{s\times r}.
      \]

\State \textbf{Define matrices:}  
      Introduce the upper shift matrix $\bfK_r\in\RR^{r\times r}$,  
      and
      \[
      \bfE_r = \exp(\bfK_r)\in\RR^{r\times r}.
      \]

\State \textbf{Solve order conditions:}  
    Solve matrix equations
      \[
      \bfC_r = \bfA\,\bfC_r\,\bfK_r + \bfU,  \qquad
      \bfE_r = \bfB\,\bfC_r\,\bfK_r + \bfV,
      \]
      to determine $\bfU$ and the first $(r-1)$ columns of $\bfB$. 

\State \textbf{Applying IQS constraints:}  
     Solve $(\bfB \bfA-\bfX \bfB)_{(3:s,:)}= \bf0$ and $\big(\bfB \bfU-(\bfX\bfV-\bfV\bfX)\big)_{(3:r,:)}= \bf0,$
      for all remaining free parameters in $\bfB$ and $\bfV$. Here $\bfX_{(3:s,:)}=\bf0$ means $\bfX$ is zero except for first $2$ rows. 

\State \textbf{Enforce L-stability:}  
      Form the stability function \eqref{3.2.4} and impose the L-stability conditions by setting :
      \[
      p_{1r}=0, \qquad p_{0r}=0,
      \]
      as given in \eqref{3.2.5}.

\State \textbf{\emph{A}-Stability via Schur criterion:}  
      Apply the Schur stability criteria to obtain the admissible range of the remaining free parameter(s). Denote this admissible set by $\mathcal{F}_{\mathrm{Schur}}$.

\State \textbf{Minimize the error constant.}  
      Solve the minimization problem   
      \[
      \min_{\text{free parameters}}
      E_r=\left|
      \frac{1}{(p+1)!}
      - \frac{\bfb^T \bfc^{\,p}}{p!}
      + \bfv^T\beta
      \right|,
      \qquad \text{subject to } \mathcal{F}_{\mathrm{Schur}}.
      \]

\State \textbf{Return} the coefficient matrices  
      $\bfA$, $\bfU$, $\bfB$, $\bfV$, and the abscissa $\bfc$.

\end{algorithmic}
\end{algorithm}

\section{Example Methods}\label{sec4}
We provide four example methods of the numerical integrators constructed using the order conditions defined in \eqref{3.1.1} and the stability imposition presented in Section \ref{subsec3.2}. The proposed classes of implicit GLMs with IQS are $A-$ and $L-$ stable. The workflow of step-by-step procedure for the construction of example methods is demonstrated in Algorithm \ref{alg:GLMcompact}. Following this procedure, we obtain parameter-dependent families of GLMs with IQS, referred to as GLMQS. The coefficient matrices for GLMQS methods of orders up to $p=4$ are provided in the subsequent subsections.
\subsection{GLMQS-1 with $\bfp=\bfq=\bf{1}$ and $\bfr=\bfs=\bf{2}$}\label{subsec4.1} 
To construct the class of implicit GLMs of order and stage order $1$, we use the Nordsieck input vector of order $1$, i.e., $y^{[n-1]}=z(t_{n-1})+\mathcal{O}(h^2)$ which is given by\\
\begin{equation}\label{4.1.1}
z(t_{n-1})=\begin{bmatrix}
    y(t_{n-1})\\
    hy^\prime(t_{n-1})
\end{bmatrix},\text{ and }Y^{[n]}=\begin{bmatrix}
    Y_1^{[n]}\\
    Y_2^{[n]}
\end{bmatrix},
\end{equation}
and, the collocation abscissa $\bfc=[0,1]^T$. This class of methods already has a quadratic stability function. After solving the order conditions, we first impose the $L-$stability criteria \eqref{3.2.6} and obtain all coefficients in terms of the parameters $v_{12}$ and $\gl$. 
The roots of the stability function are given as follows:
\begin{equation*}
\begin{small}
\begin{aligned}
\eta_1 = 
\frac{- 3\gl \omega - v_{12} \omega + \omega + 1
    -\sqrt{
        (3\gl \omega + v_{12} \omega - \omega - 1)^2
        - 4\omega(-\gl - v_{12})(\gl^2 \omega^2 - 2\gl \omega + 1)
    }}{
    2(\gl^2 \omega^2 - 2\gl \omega + 1)
},
\\
\eta_2 =
\frac{- 3\gl \omega - v_{12} \omega + \omega + 1+
    \sqrt{
        (3\gl \omega + v_{12} \omega - \omega - 1)^2
        - 4\omega(-\gl - v_{12})(\gl^2 \omega^2 - 2\gl \omega + 1)
    }}{
    2(\gl^2 \omega^2 - 2\gl \omega + 1)
}  .    
\end{aligned}
\end{small}
\end{equation*}
We then solve the minimization problem subject to the constraints of $A-$ stability criteria to obtain the values of the free parameters $v_{12}$ and $\gl$. The GLM coefficients can then be expressed in the Butcher tableau as follows:
\begin{equation}
M_1=\left[\begin{array}{cc|cc}
    \gl & 0 &1 & u_{12}\\
    a_{21} & \gl &1 &u_{22}\\
    \hline
    b_{11} &b_{12} & 1 & v_{12}\\
    b_{21} &b_{22} & 0 & 0
\end{array}\right].
\end{equation}
We arrive at the class of GLMs with method coefficient $(\bfc,\bfA,\bfU,\bfB,\bfV)$ as given below: 
        \[
\small
\bfA = 
\begin{bmatrix}
0.4779022865816724 & 0 \\
1 & 0.4779022865816724
\end{bmatrix},
\]
\[
\small
\bfU = 
\begin{bmatrix}
1 & -0.4779022865816724 \\
1 & -0.4779022865816724
\end{bmatrix},
\]
\[
\small
\bfB = 
\begin{bmatrix}
0.9999999999996634 & 0.47790228658136436 \\
0.5220977134183276 & 0.4779022865816724
\end{bmatrix},
\]
\[
\small
\bfV = 
\begin{bmatrix}
1 & -0.4779022865810278 \\
0 & 0
\end{bmatrix}.
\quad
\]
The error constant for this example method is $E = 0.22741$, and the stability function having two non-zero eigenvalues is given as follows:
\begin{equation}
\begin{aligned}
&p(\eta,\omega)=  \eta^2 (-1 + 0.477902 \omega)^2+\eta (-1. - 0.0441954 \omega)\\  
&\quad \quad \quad \quad   - (6.44595\times10^{-13} + 1.11022\times10^{-16} \omega) \omega.
\end{aligned}
\end{equation}

\subsection{GLMQS-2 with $\bfp=\bfq=\bf{2}$ and $\bfr=\bfs=\bf{3}$}\label{subsec4.2}
To obtain the example method of order $2$, first solve the order conditions, and then solve the IQS stability criteria \eqref{IQScondition}. In addition, the $L-$stability criteria \eqref{3.2.6} is solved to reduce the coefficients of the method in terms of the free parameters $v_{13}$ and $\gl$. The stability function in terms of the free parameters has the form $p(\eta,\omega)=\eta((1-\gl \omega)^3\eta^2-p_1(\omega)\eta+p_0(\omega))$ with 
\begin{equation}
\begin{aligned}
   & p_1(\omega)=\frac{1}{2}  (2 + (3 - 10 \gl + 2 \gl^2) \omega + (1 - 7 \gl + 12 \gl^2 - 4 \gl^3 - 2 v_{13}) \omega^2),\\
   & p_0(\omega)=\frac{1}{2} \omega (1 + \omega - 4 \gl^3 \omega - 2 v_{13} \omega - 2 \gl^2 (1 + 4 \omega) - \gl (4 + 5 \omega)).
\end{aligned}
\end{equation}
Further to evaluate the values of the free parameters in $p(\eta,\omega)$, the minimization problem \eqref{eq:optimization} is solved under the constraints obtained by Schur stability criteria. The implicit GLM with IQS, of order $p=q=2$, and $r=s=3$ with the Nordsieck input vector and predetermined collocation abscissa $\bfc=[0,1/2,1]^T$ is given as follows:
    \[
    \small
    \bfA =
    \begin{bmatrix}
    0.4127594486653355 & 0 & 0 \\
    0.5 & 0.4127594486653355 & 0 \\
    0.5 & 0.5 & 0.4127594486653355
    \end{bmatrix},
    \]
    
    \[
    \small
    \bfU = 
    \begin{bmatrix}
    1 & -0.4127594486653355 & 0 \\
    1 & -0.4127594486653355 & 0.04362027566733226 \\
    1 & -0.4127594486653354 & -0.16275944866533548
    \end{bmatrix},
    \]
    
    \[
    \small
    \bfB = 
    \begin{bmatrix}
    0.08251725509138857 & 1.1935839192127649 & -0.10573081184164185 \\
    -0.825518897330671 & 1.8255188973306709 & 0 \\
    -2 & 2 & 0
    \end{bmatrix},
    \]
    
    \[
    \small
    \bfV = 
    \begin{bmatrix}
    1 & -0.17037036246251172 & 0.00893885223525935 \\
    0 & 0 & 0.08724055133466452 \\
    0 & 0 & 0
    \end{bmatrix}.
    \]
 Here, the error constant, $E=0.0195824$, with the stability function
\begin{equation}
\begin{aligned}
& p(\eta,\omega)=\eta(\eta^2 (-1 + 0.412759 \omega)^3 - \eta (-1 + 0.393427 \omega + 0.0720187 \omega^2)\\  
&\quad \quad \quad \quad +(0.155149 - 2.21936\times10^{-8} \omega) \omega).
\end{aligned}
\end{equation}	
\subsection{GLMQS-3 with $\bfp=\bfq=\bf{3}$ and $\bfr=\bfs=\bf{4}$}\label{subsec4.3} 
Following a similar construction as in order $2$ and a simplifying assumption $v_{13}=v_{12}$, we obtain the method coefficient in terms of the free parameters $v_{12}$ and $\gl$. The stability function $p(\eta,\omega)=\eta^2((1-\gl \omega)^3\eta^2-p_1(\omega)\eta+p_0(\omega))$ with
\begin{equation}
\begin{aligned}
  & p_1(\omega)=  (1 - \frac{1}{3} (-4 + 21 \gl + 3 v_{12}) \omega + 
\frac{1}{27} (17 + 324 \gl^2 - 27 v_{12} + 9 \gl (-16 + 9 v_{12})) \omega^2 +\\&\quad\quad\quad\quad \frac{1}{54}(-7 - 378 \gl^3 +\gl (20 - 54 v_{12}) - 108 \gl^2 (-2 + v_{12}) + 81 v_{12}) \omega^3),\\
& p_0(\omega)=\left(\frac{1}{3} - 3\gl - v_{12}\right) \omega 
 + \left(\frac{25}{54} - \frac{13\gl}{3} + 6\gl^2 - 2v_{12} + 3\gl v_{12}\right) \omega^2 
 +\\
 &\quad\quad\quad\quad  \left(-\frac{25\gl}{54} + 4\gl^2 - 3\gl^3 + 2\gl v_{12} - 2\gl^2v_{12}\right) \omega^3.
\end{aligned}
\end{equation}
We present the class of implicit GLMs of order and stage order $3$ with the Nordsieck input vector and the collocation abscissa $\bfc=[0,1/3,2/3,1]^T$. The GLM coefficient matrices $(\bfA,\bfU,\bfB,\bfV)$, with the error constant $E=7.729463\times10^{-10}$ are given below:
\begin{equation*}
\begin{small}
\bfA = 
\begin{bmatrix}
1.3070643469 & 0 & 0 & 0 \\
0.3333333333 & 1.3070643469 & 0 & 0 \\
0.3333333333 & 0.3333333333 & 1.3070643469 & 0 \\
0.3333333333 & 0.3333333333 & 0.3333333333 & 1.3070643469
\end{bmatrix},
\end{small}
\end{equation*}

\begin{equation*}
\begin{small}
\bfU = 
\begin{bmatrix}
1 & -1.3070643469 & 0 & 0 \\
1 & -1.3070643469 & -0.3801325601 & -0.0664418464 \\
1 & -1.3070643469 & -0.7602651202 & -0.2595945462 \\
1 & -1.3070643469 & -1.1403976803 & -0.5794580994
\end{bmatrix},
\end{small}
\end{equation*}

\begin{equation*}
\begin{small}
\bfB = 
\begin{bmatrix}
-0.8343558447 & 2.1518400434 & -0.3006125529 & 0.9548594035 \\
5.9455090739 & -19.7334042294 & 14.7878951555 & 0 \\
14.7635791223 & -32.5271582445 & 17.7635791223 & 0 \\
9 & -18 & 9 & 0
\end{bmatrix},
\end{small}
\end{equation*}

\begin{equation*}
\begin{small}
\bfV = 
\begin{bmatrix}
1 & -0.9717310493 & -0.9717310493 & -0.3635069146 \\
0 & 0 & -2.2807953605 & -1.6898986885 \\
0 & 0 & 0 & -1.1403976803 \\
0 & 0 & 0 & 0
\end{bmatrix}.
\end{small}
\end{equation*}
The resulting stability function of this class of methods is given as 
\begin{equation}
\begin{aligned}
&p(\eta,\omega)=\eta^2(\eta^2 (-1+1.30706 \omega)^4  +\eta\left(5.31018 \omega^3-11.321 \omega^2+6.84439 \omega-1\right)\\ 
&\quad \quad \quad \quad +\omega\left(0.309527 \omega^2+3.18264 \omega-2.61613\right)).
\end{aligned}
\end{equation}

\subsection{GLMQS-4 with $\bfp=\bfq=\bf{4}$ and $\bfr=\bfs=\bf{5}$}\label{subsec4.4} 
In this example method, we solve the minimization problem with constraints obtained using Schur stability criteria for the free parameters $\gl, v_{12}, v_{13}$ and $v_{14}$. The stability polynomial for this class of method is $p(\eta,\omega)=\eta^2((1-\gl \omega)^3\eta^2-p_1(\omega)\eta+p_0(\omega))$, we omit writing the explicit coefficients $p_{1}(\omega)$ and $p_{0}(\omega)$ due to its complex expression. The implicit GLM with IQS of order $p=q=4$, and $r=s=4$ with predetermined collocation abscissa $\bfc=[0,1/4,2/4,3/4,1]^T$ is given as follows:
\begin{equation*}
\begin{footnotesize}
\bfA = 
\begin{bmatrix}
1.14488604 & 0 & 0 & 0 & 0 \\
0.25 & 1.14488604 & 0 & 0 & 0 \\
0.25 & 0.25 & 1.14488604 & 0 & 0 \\
0.25 & 0.25 & 0.25 & 1.14488604 & 0 \\
0.25 & 0.25 & 0.25 & 0.25 & 1.14488604
\end{bmatrix},
\end{footnotesize}
\end{equation*}

\begin{equation*}
\begin{footnotesize}
\bfU = 
\begin{bmatrix}
1 & -1.14488604 & 0 & 0 & 0 \\
1 & -1.14488604 & -0.25497151 & -0.03317352 & -0.00281871 \\
1 & -1.14488604 & -0.50994302 & -0.13008992 & -0.02189867 \\
1 & -1.14488604 & -0.76491453 & -0.29074920 & -0.07317558 \\
1 & -1.14488604 & -1.01988604 & -0.51515135 & -0.17258517
\end{bmatrix},
\end{footnotesize}
\end{equation*}

\begin{equation*}
\begin{footnotesize}
\bfB = 
\begin{bmatrix}
43.96171205 & -203.73777224 & 341.62582482 & -248.83459442 & 69.31103311 \\
-57.45201209 & 215.29165614 & -271.46590848 & 114.62626443 & 0 \\
-33.44194715 & 138.96219468 & -181.59854791 & 76.07830038 & 0 \\
-97.27270647 & 307.81811940 & -323.81811940 & 113.27270647 & 0 \\
-64 & 192 & -192 & 64 & 0
\end{bmatrix},
\end{footnotesize}
\end{equation*}

\begin{equation*}
\begin{footnotesize}
\bfV = 
\begin{bmatrix}
1 & -1.32620332 & -2.06355665 & -0.84054293 & -0.60062733 \\
0 & 0 & -3.05965812 & -4.53326256 & -2.79810815 \\
0 & 0 & 0 & -2.03977208 & -1.42783313 \\
0 & 0 & 0 & 0 & -1.01988604 \\
0 & 0 & 0 & 0 & 0
\end{bmatrix}.
\end{footnotesize}
\end{equation*}
The error constant for this method is $E=2.25574\times10^{-8}$ and the corresponding stability function is given as follows:
\begin{small}
\begin{equation}
\begin{aligned}
&p(\eta,\omega)=\eta^3(\eta^2 (0.839345 \omega-1)^5-\eta \left(-1+3.38067 \omega+2.49947 \omega^2-0.273379 \omega^3+0.459926 \omega^4\right)\\
& \quad \quad \quad \quad \quad +\omega (-0.725147 \omega^3-0.132481 \omega^2+1.03275 \omega+0.183942)).
\end{aligned}
\end{equation}
\end{small}

\section{Numerical Experiments}\label{sec5}
In this section, we present numerical illustrations for the GLM solvers constructed in Section \ref{sec4} using three well-known test problems that model various real-world phenomena. First, we consider the van der Pol problem, a nonlinear oscillator equation used in fields such as electronics, physics, biology, and acoustics. One of its primary applications is the simulation of the human heartbeat, particularly when the damping coefficient is large. We then perform numerical simulations for Burgers' equation, a classical PDE that arises in gas dynamics, nonlinear acoustics, fluid mechanics, traffic flow, and several other areas. Finally, we solve the Gray–Scott model, a reaction–diffusion system that describes how chemical species move, react, and spread. The Gray–Scott model is relevant in many physical and biological contexts and is used in applications including biology, epidemiology, and medicine \cite{Karaagac2021}. The effective order of accuracy of the proposed methods is estimated using the formula:
\begin{equation}\label{OrderEst.}
p = \frac{\log \left(\frac{\left\|e_{N_{1}}(T)\right\|}{\left\|e_{N_{2}}(T)\right\|}\right)}{\log \frac{N_{2}}{N_{1}}},
\end{equation}
where $ \|e_{N_{1}}(T)\| $, $ \|e_{N_{2}}(T)\| $ denote the error norms at time $ T $ when using two consecutive time steps $ N_1, N_2 $ respectively. The time steps  $ N_1, N_2 $ are given by  
\begin{equation}
h_1 = \frac{T - t_0}{N_1},\quad h_2 = \frac{T - t_0}{N_2},
\end{equation}
where $ h_1, h_2 $ are the step sizes. All the order calculations for Tables \ref{table1}, \ref{table2} and \ref{table3} are done using \eqref{OrderEst.}.
To assess the accuracy and efficiency of the proposed solvers, we compute error norms from numerical simulations while refining the time grid. The error associated with the step size $h$ is denoted by $ e_{h} $. For any general method of order $p$, the global error is expected to behave as follows:  
\begin{equation}\label{graphorderest}
\begin{aligned}
    & \|e_{h}\| =C_1h^p+\mathcal{O}(h^{p+1}),\\
    & \|e_{h}\| \approx C_1h^p,\\
\end{aligned}
\end{equation}
for some constant $ C_1 > 0 $. Taking natural logarithm on both sides, we obtain $\log \|e_{h}\|\approx \log (C_1h^p)=\log(C_1)+p \log{h}$, this represents a linear relationship between $\log \|e_{h}\|$ and $\log h$ with slope $p$. The order shown in the convergence plots of the problems taken in this section is obtained using the relation \eqref{graphorderest}. Also, as GLMs are not self-starting methods, for starting the method, we use the starting procedure given by \cite{Califano2017} based on the approximations to $y$ at equally spaced arguments.
\subsection{Van der Pol problem}
The van der Pol problem \cite{Izzo2025, Hairer1996} is the system of two first-order differential equations obtained from the second-order oscillator differential equation given below
\begin{equation}\label{vdpol}
    \begin{aligned}
        y^{\prime} &= z, \\
        z^{\prime} &= \frac{(1-y^2)z - y}{\epsilon}.
    \end{aligned}
\end{equation}
Here, the initial conditions are taken as $y(0) = 2$ and $z(0) = -\frac{2}{3} + \frac{10}{81}\epsilon - \frac{292}{2187}\epsilon^2 - \frac{1814}{19683}\epsilon^3 + \cO(\epsilon^4)$. We have solved this problem for $\epsilon=10^{-6}$, for which the problem becomes highly stiff in nature. The interval of integration $t \in [0,0.5]$, with end point $T=0.5$.
\begin{table}[h]
\caption{End‑point $L_2$~‑error norm $\|e_{N_i}\|$ and estimated order $p$ for the van der Pol problem}
\label{table1}
\begin{small}
\begin{tabular*}{\textwidth}{@{\extracolsep\fill}lcccccccc}
\toprule
& \multicolumn{4}{c}{$p = 1$} & \multicolumn{4}{c}{$p = 2$} \\
\cmidrule(lr){2-5} \cmidrule(lr){6-9}
&\multicolumn{2}{c}{GLMQS} & \multicolumn{2}{c}{Bra\'s \cite{Bras2011}} & \multicolumn{2}{c}{GLMQS} & \multicolumn{2}{c}{Bra\'s \cite{Bras2011}}\\
\cmidrule(lr){2-3} \cmidrule(lr){4-5} \cmidrule(lr){6-7}\cmidrule(lr){8-9}
$N_i$ & $\|e_{N_i}\|$ & $p$ & $\|e_{N_i}\|$ & $p$
      & $\|e_{N_i}\|$ & $p$ & $\|e_{N_i}\|$ & $p$\\
\midrule
5   & 8.80e-3 & --   & 4.85e-2 & --   & 1.59e-2 & --   & 6.57e-3 & --   \\
10  & 6.80e-3 & 0.37 & 2.19e-2 & 1.15 & 5.66e-3 & 1.49 & 1.15e-3 & 2.51 \\
20  & 3.98e-3 & 0.77 & 1.04e-2 & 1.07 & 1.78e-3 & 1.66 & 2.17e-4 & 2.41 \\
40  & 2.13e-3 & 0.90 & 5.11e-3 & 1.03 & 5.11e-4 & 1.80 & 4.37e-5 & 2.31 \\
80  & 1.10e-3 & 0.95 & 2.53e-3 & 1.02 & 1.38e-4 & 1.89 & 9.51e-6 & 2.20 \\
160 & 5.60e-4 & 0.98 & 1.26e-3 & 1.01 & 3.58e-5 & 1.94 & 2.19e-6 & 2.12 \\
320 & 2.82e-4 & 0.99 & 6.27e-4 & 1.00 & 9.11e-6 & 1.97 & 5.23e-7 & 2.06 \\
\midrule
& \multicolumn{4}{c}{$p = 3$} & \multicolumn{4}{c}{$p = 4$} \\
\cmidrule(lr){2-5} \cmidrule(lr){6-9}
& \multicolumn{2}{c}{GLMQS} & \multicolumn{2}{c}{Bra\'s \cite{Bras2011}} & \multicolumn{2}{c}{GLMQS} & \multicolumn{2}{c}{Bra\'s \cite{Bras2011}}\\
\cmidrule(lr){2-3} \cmidrule(lr){4-5} \cmidrule(lr){6-7}\cmidrule(lr){8-9}
$N_i$ & $\|e_{N_i}\|$ & $p$ & $\|e_{N_i}\|$ & $p$
      & $\|e_{N_i}\|$ & $p$ & $\|e_{N_i}\|$ & $p$\\
\midrule
5   & 6.65e-3 & --   & 1.33e-2 & --   & 2.64e-3 & --   & 1.67e-2 & --   \\
10  & 7.18e-4 & 3.21 & 1.48e-3 & 3.17 & 6.73e-4 & 1.97 & 6.08e-4 & 4.78 \\
20  & 7.08e-5 & 3.34 & 1.25e-4 & 3.57 & 5.66e-5 & 3.57 & 1.62e-5 & 5.22 \\
40  & 5.80e-6 & 3.61 & 8.07e-6 & 3.90 & 2.88e-6 & 4.29 & 1.46e-6 & 3.47 \\
80  & 4.21e-7 & 3.78 & 6.86e-7 & 3.61 & 1.16e-7 & 4.64 & 2.01e-8 & 6.19 \\
160 & 2.85e-8 & 3.88 & 1.00e-7 & 2.77 & 2.46e-9 & 5.56 & 3.42e-9 & 2.56 \\
320 & 1.85e-9 & 3.95 & 1.50e-8 & 2.74 & 3.08e-10 & 3.00 & 9.82e-10 & 1.80 \\
\bottomrule
\end{tabular*}
\end{small}
\end{table}
\begin{figure}[H]
    \centering
    \includegraphics[height=8cm,width=0.85\linewidth]{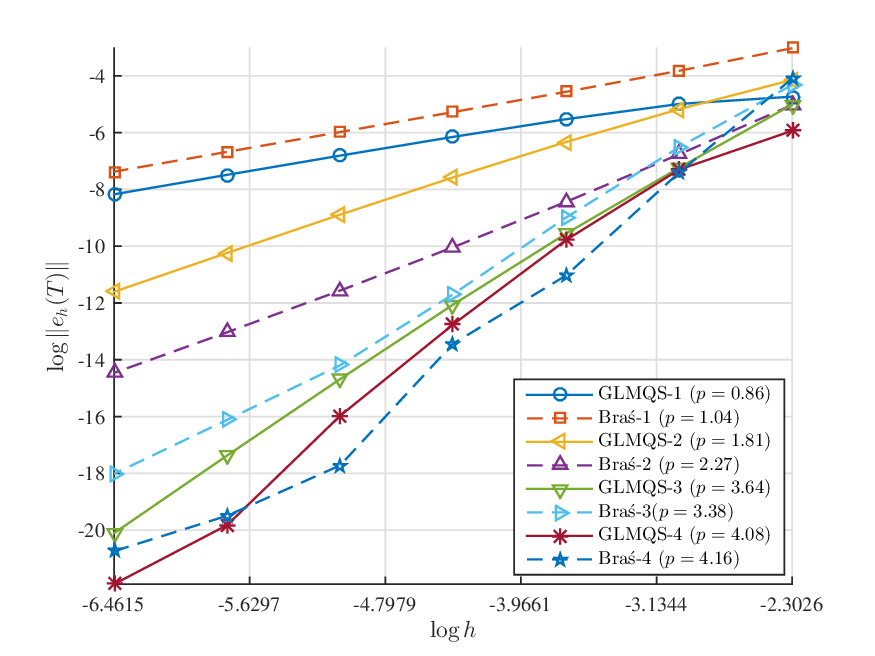}
    \caption{Convergence plot for van der Pol problem}
    \label{fig1}
\end{figure}
\subsection{Burgers' Equation} 
We consider an example of a PDE governed by Burger's equation \cite{Hundsdorfer2003}, which is a one-dimensional quasi-linear parabolic partial differential equation
\begin{equation}\label{Burger}
    \begin{aligned}
        &\frac{\partial u}{\partial t}=-u \frac{\partial u}{\partial x}+d \frac{\partial^2 u}{\partial x^2}, \quad 0<x<L, \quad 0<t<T,
    \end{aligned}
\end{equation}
and the initial condition,
\begin{equation}
    u(x, 0)=\sin (\pi x/L), \quad 0<x<L,
\end{equation}
and boundary conditions,
\begin{equation}
    u(0, t)=u(L, t)=0, \quad t>0.
\end{equation}
Here $d>0$ is the coefficient of viscosity (diffusion coefficient), which can often become very large. Burgers' equation can be used to model a lot of different problems \cite{Hundsdorfer2003, Jackiewicz2009}. For instance, Burgers' equation is another way to look at the Navier-Stokes equations. It has nonlinear parts like unknown functions multiplied by a first derivative and higher-order parts multiplied by a small constant.

For illustration purposes, we choose the final space point $L=1$ and the final time point $T=1$. The spatial discretization is done by taking step size $k=1/(M-1)$, where $M$ is the number of spatial grid points, for $x \in [0,1]$ where $x_0 = 0 \text{ and } L = 1$. We use the central difference approximation to discretize the second-order spatial derivative (diffusion term) on the right-hand side of \eqref{Burger} and the upwind scheme to discretize the first-order derivative (convection term). After the spatial discretization the resulting system of time dependent ODEs is stiff in nature \cite{Hundsdorfer2003}.
\begin{table}[h]
\caption{End‑point $L_2$~‑absolute error norm $\|e_{N_i}\|$ and estimated order $p$ for Burgers’ equation ($T=1$, $d=0.1$)}
\label{table2}
\begin{small}
\begin{tabular*}{\textwidth}{@{\extracolsep\fill}lcccccccc}
\toprule
& \multicolumn{4}{c}{$p = 1$} & \multicolumn{4}{c}{$p = 2$} \\
\cmidrule(lr){2-5} \cmidrule(lr){6-9}
& \multicolumn{2}{c}{GLMQS} & \multicolumn{2}{c}{Bra\'s \cite{Bras2011}} & \multicolumn{2}{c}{GLMQS} & \multicolumn{2}{c}{Bra\'s \cite{Bras2011}}\\
\cmidrule(lr){2-3} \cmidrule(lr){4-5} \cmidrule(lr){6-7}\cmidrule(lr){8-9}
$N_i$ & $\|e_{N_i}\|$ & $p$ & $\|e_{N_i}\|$ & $p$
      & $\|e_{N_i}\|$ & $p$ & $\|e_{N_i}\|$ & $p$\\
\midrule
20   & 1.13e-2 & --   & 3.01e-2 & --   & 5.06e-4 & --   & 1.17e-3 & --   \\
40   & 6.33e-3 & 0.84 & 1.53e-2 & 0.97 & 1.23e-4 & 2.04 & 2.41e-4 & 2.28 \\
80   & 3.35e-3 & 0.92 & 7.73e-3 & 0.99 & 3.00e-5 & 2.03 & 5.22e-5 & 2.21 \\
160  & 1.72e-3 & 0.96 & 3.88e-3 & 0.99 & 7.40e-6 & 2.02 & 1.19e-5 & 2.13 \\
320  & 8.74e-4 & 0.98 & 1.95e-3 & 1.00 & 1.83e-6 & 2.01 & 2.84e-6 & 2.07 \\
640  & 4.40e-4 & 0.99 & 9.74e-4 & 1.00 & 4.57e-7 & 2.01 & 6.91e-7 & 2.04 \\
1280 & 2.21e-4 & 0.99 & 4.87e-4 & 1.00 & 1.14e-7 & 2.00 & 1.70e-7 & 2.02 \\
\midrule
& \multicolumn{4}{c}{$p = 3$} & \multicolumn{4}{c}{$p = 4$} \\
\cmidrule(lr){2-5} \cmidrule(lr){6-9}
& \multicolumn{2}{c}{GLMQS} & \multicolumn{2}{c}{Bra\'s \cite{Bras2011}} & \multicolumn{2}{c}{GLMQS} & \multicolumn{2}{c}{Bra\'s \cite{Bras2011}}\\
\cmidrule(lr){2-3} \cmidrule(lr){4-5} \cmidrule(lr){6-7}\cmidrule(lr){8-9}
$N_i$ & $\|e_{N_i}\|$ & $p$ & $\|e_{N_i}\|$ & $p$
      & $\|e_{N_i}\|$ & $p$ & $\|e_{N_i}\|$ & $p$\\
\midrule
20   & 4.97e-4 & --   & 4.33e-4 & --   & 2.28e-4 & --   & 2.89e-4 & --   \\
40   & 4.00e-5 & 3.64 & 6.03e-5 & 2.84 & 7.63e-6 & 4.90 & 1.15e-5 & 4.65 \\
80   & 2.82e-6 & 3.82 & 8.07e-6 & 2.90 & 3.34e-7 & 4.51 & 2.94e-7 & 5.29 \\
160  & 1.87e-7 & 3.91 & 1.04e-6 & 2.95 & 3.20e-8 & 3.38 & 1.76e-8 & 4.07 \\
320  & 1.21e-8 & 3.95 & 1.33e-7 & 2.98 & 2.45e-9 & 3.71 & 1.66e-9 & 3.41 \\
640  & 7.70e-10 & 3.97 & 1.67e-8 & 2.99 & 1.70e-10 & 3.85 & 1.29e-10 & 3.68 \\
1280 & 5.03e-11 & 3.94 & 2.10e-9 & 2.99 & 1.46e-11 & 3.55 & 1.25e-11 & 3.37 \\
\bottomrule
\end{tabular*}
\end{small}
\end{table}
\begin{figure}[H]
\centering
\includegraphics[height=8cm,width=0.85\linewidth]{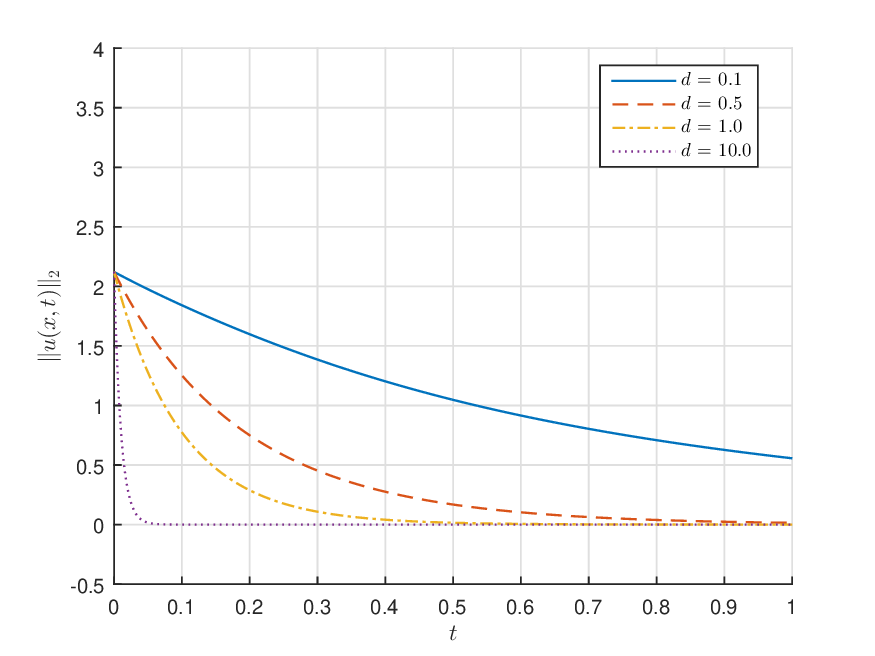}
\caption{Effect of diffusion coefficient using GLMQS-4 for Burgers' equation with $h=1/1280$}
\label{fig3}
\end{figure}
\begin{figure}[H]
\centering
\includegraphics[height=8cm,width=0.85\linewidth]{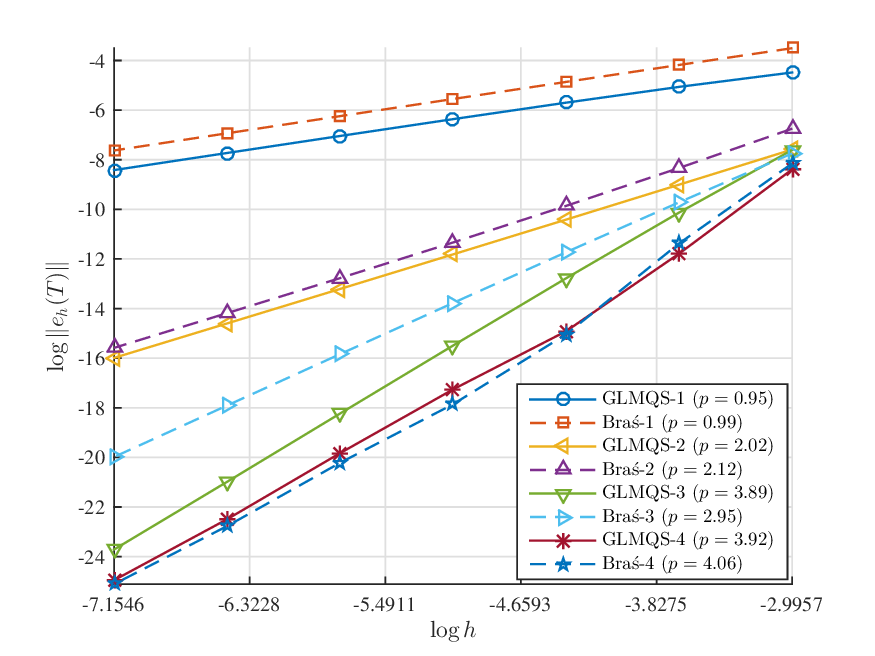}
    \caption{Convergence plots: $\log\|e_h(T)\|$ versus $\log h$ for Burgers' equation with $d=1/10$}
    \label{fig4}
\end{figure}

 \begin{figure}[H]
\centering
\includegraphics[height=8cm,width=0.85\linewidth]{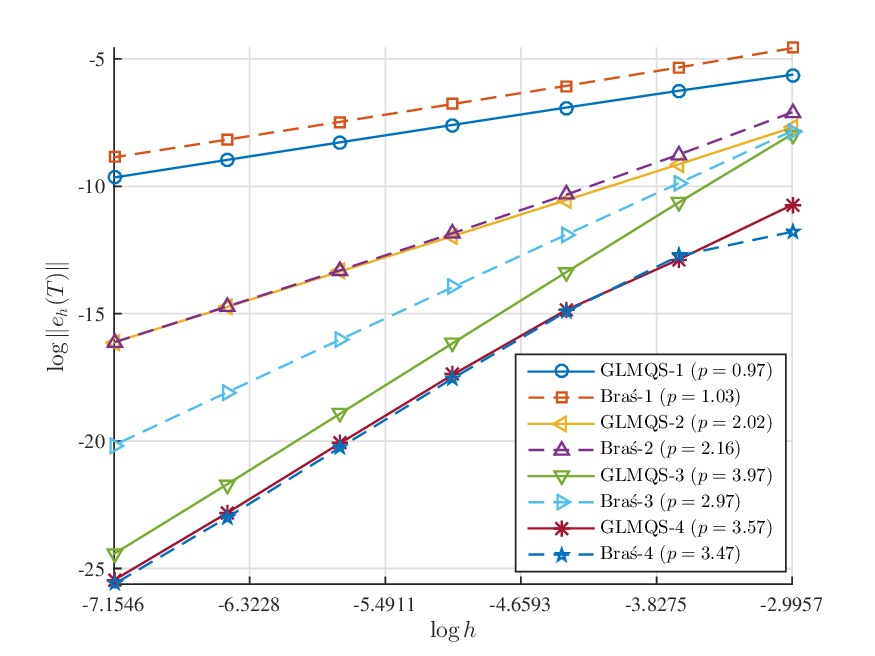}
    \caption{Convergence plots: $\log\|e_h(T)\|$ versus $\log h$ for Burgers' equation with $d=1/2$}
    \label{fig5}
\end{figure}
\begin{figure}[H]
\centering
\begin{subfigure}{0.49\textwidth}
    \centering
    \includegraphics[height=6cm,width=\textwidth]{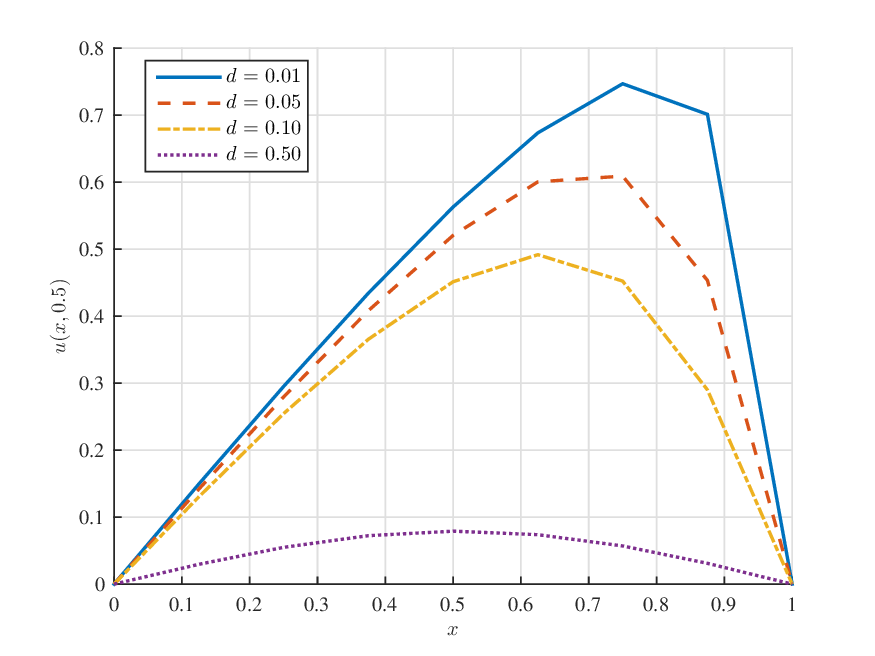}
    \caption{$k=1/9$}
    \label{fig7(a)}
\end{subfigure}
\hfill
\begin{subfigure}{0.49\textwidth}
    \centering
    \includegraphics[height=6cm,width=\textwidth]{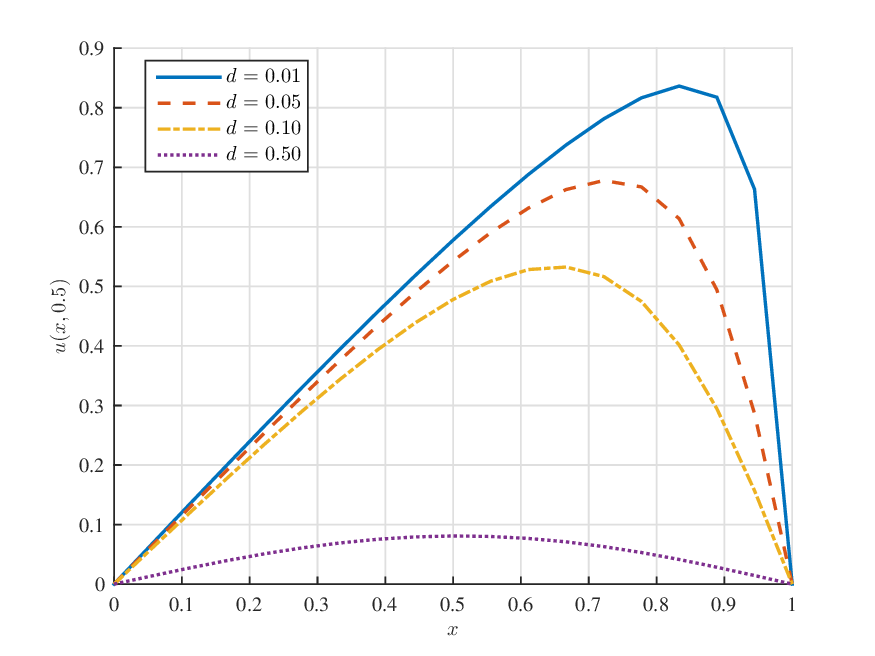}
    \caption{$k=1/19$}
    \label{fig7(b)}
\end{subfigure}
\caption{Numerical solution behaviors with fixed $t=0.5$ and $h=1/640$ for various diffusion coefficients}
\label{fig7}
\end{figure}
\subsection{Gray-Scott Model}
Next, we test our proposed schemes on a coupled PDE known as the Gray-Scott model \cite{Pearson1993}, which is given as follows:
\begin{equation}\label{eq:grayscott}
\begin{aligned}
    \frac{\partial u}{\partial t} &= d_1 \nabla^2 u - u v^2 + F(1-u), \\
    \frac{\partial v}{\partial t} &= d_2 \nabla^2 v + u v^2 - (F+\kappa)v,
\end{aligned}
\end{equation}
where $d_1$ and $d_2$ are the diffusion coefficients corresponding to the concentrations of the chemical species $u$ and $v$ on the square spatial domain $\Omega = [0,L]\times[0,L]$, $L>0$. Here, $F$ is the feed rate and $\kappa$ is the kill rate. 
The spatial domain is discretized on a uniform $M\times M$ grid with spacing $k=\Delta x= \Delta y = L/M,$ which gives a standard five-point stencil for the second-order derivative approximation $u$ and $v$. For $\phi \in \{u,v\}$
\begin{equation}\label{eq:laplacian}
\nabla^2 \phi_{i,j} \approx \frac{\phi_{i+1,j} + \phi_{i-1,j} + \phi_{i,j+1} + \phi_{i,j-1} - 4\phi_{i,j}}{k^2}.
\end{equation}
For the numerical simulation, we choose the final space point $L=1$ and the final time point $T=1$. We use the parameter values 
\begin{equation*}
d_1 = 2\times 10^{-5}, 
\quad d_2 = 1\times 10^{-5}, 
\quad F = 0.04,
\quad \kappa = 0.06.   
\end{equation*}
The boundary conditions are taken as homogeneous Neumann boundary conditions
\begin{equation}
    \frac{\partial u}{\partial n} = \frac{\partial v}{\partial n} = 0 
    \qquad \text{on } \partial\Omega,
\end{equation}
with initial conditions
\begin{equation}
u(x,y,0) = 1, 
\qquad v(x,y,0) = 0.
\end{equation}
Also, consider a small random perturbation of $v$ inside a localized square subregion of the domain, i.e.,
\begin{equation}
v(x,y,0) = \mathcal{O}(10^{-1}) \quad \text{for } (x,y)\in \Omega_c \subset \Omega,
\end{equation}
where $\Omega_c$ denotes a small square centered in $\Omega$. Although \eqref{eq:grayscott} is a well-known PDE for pattern formation, such patterns typically emerge only when the simulations are carried out over very long time intervals, e.g., $T = 1000$ or $T = 10000$. This is not feasible for the proposed schemes, since they are implicit in nature. Therefore, we refrain from demonstrating the pattern-formation phenomena.
 
\begin{table}[h]
\caption{End-point $L_2$~- relative error norm $\|e_{N_i}\|$ and estimated order $p$ for Gray-Scott model}
\label{table3}
\begin{small}
\begin{tabular*}{\textwidth}{@{\extracolsep\fill}lcccccccc}
\toprule
& \multicolumn{4}{c}{$p = 1$} & \multicolumn{4}{c}{$p = 2$} \\
\cmidrule(lr){2-5} \cmidrule(lr){6-9}
& \multicolumn{2}{c}{GLMQS} & \multicolumn{2}{c}{Bra\'s \cite{Bras2011}} & \multicolumn{2}{c}{GLMQS} & \multicolumn{2}{c}{Bra\'s \cite{Bras2011}}\\
\cmidrule(lr){2-3} \cmidrule(lr){4-5} \cmidrule(lr){6-7}\cmidrule(lr){8-9}
$N_i$ & $\|e_{N_i}\|$ & $p$ & $\|e_{N_i}\|$ & $p$
      & $\|e_{N_i}\|$ & $p$ & $\|e_{N_i}\|$ & $p$\\
\midrule
10   & 4.84e-5 & --   & 1.89e-4 & --   & 1.08e-6 & --   & 1.58e-6 & --   \\
20   & 2.99e-5 & 0.69 & 9.33e-5 & 1.02 & 2.64e-7 & 2.02 & 2.98e-7 & 2.41 \\
40   & 1.63e-5 & 0.87 & 4.64e-5 & 1.01 & 6.57e-8 & 2.01 & 6.26e-8 & 2.25 \\
80   & 8.52e-6 & 0.94 & 2.31e-5 & 1.00 & 1.64e-8 & 2.00 & 1.42e-8 & 2.14 \\

\midrule
& \multicolumn{4}{c}{$p = 3$} & \multicolumn{4}{c}{$p = 4$} \\
\cmidrule(lr){2-5} \cmidrule(lr){6-9}
& \multicolumn{2}{c}{GLMQS} & \multicolumn{2}{c}{Bra\'s \cite{Bras2011}} & \multicolumn{2}{c}{GLMQS} & \multicolumn{2}{c}{Bra\'s \cite{Bras2011}}\\
\cmidrule(lr){2-3} \cmidrule(lr){4-5} \cmidrule(lr){6-7}\cmidrule(lr){8-9}
$N_i$ & $\|e_{N_i}\|$ & $p$ & $\|e_{N_i}\|$ & $p$
      & $\|e_{N_i}\|$ & $p$ & $\|e_{N_i}\|$ & $p$\\
\midrule
10   & 1.16e-7 & --   & 1.62e-7 & --   & 5.83e-10 & --   & 1.18e-9 & --   \\
20   & 7.01e-9 & 4.05 & 1.93e-8 & 3.06 & 8.56e-11 & 2.77 & 3.57e-11 & 5.04 \\
40   & 4.30e-10 & 4.02 & 2.37e-9 & 3.03 & 7.96e-12 & 3.43 & 5.72e-12 & 2.64 \\
80   & 3.19e-11 & 3.75 & 2.83e-10 & 3.06 & 6.44e-13 & 3.63 & 4.73e-13 & 3.60 \\

\bottomrule
\end{tabular*}
\end{small}
\end{table}

\begin{figure}[h]
\centering
\includegraphics[height=8cm,width=0.85\linewidth]{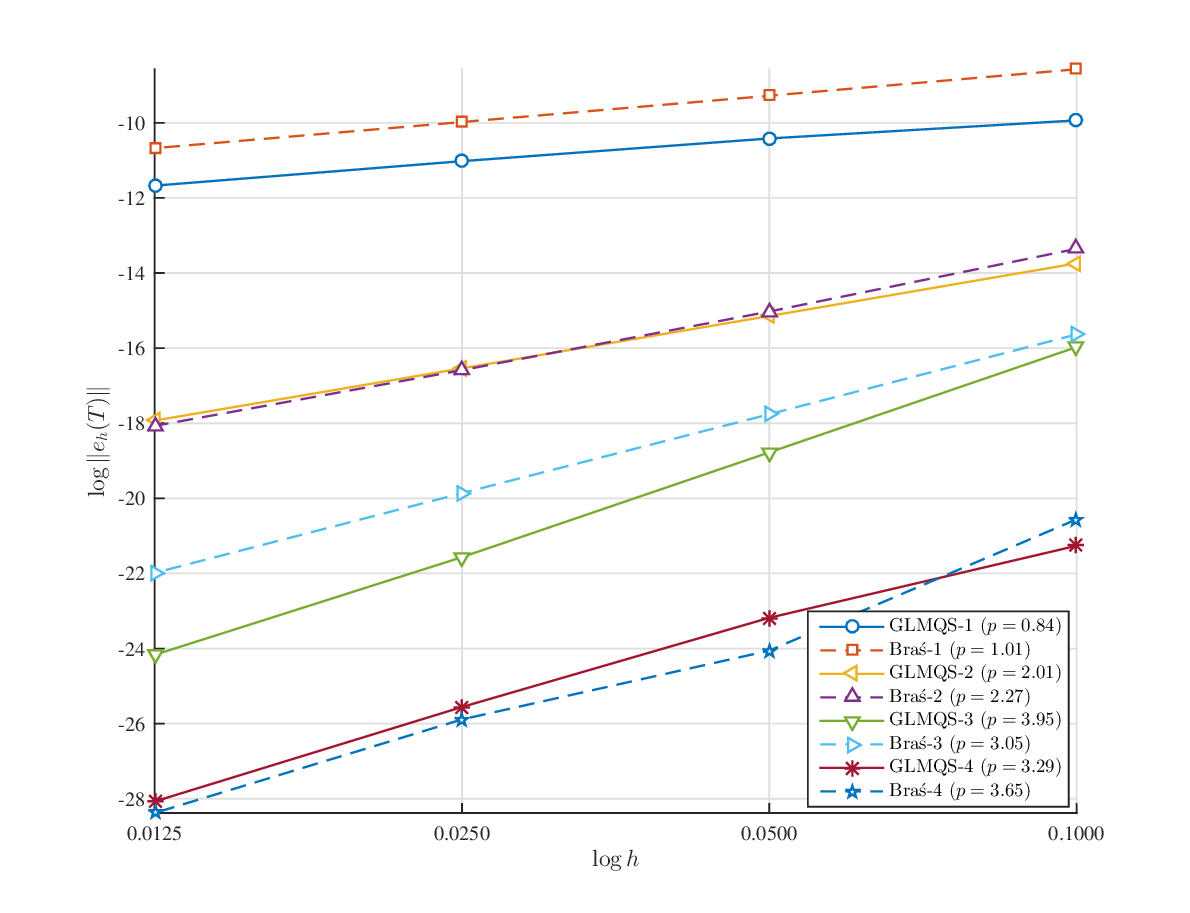}
\caption{Convergence plot for Gray-Scott model}
\label{fig8}
\end{figure}
\section{Results and Discussion}\label{sec6}
The GLMQS schemes constructed in Section~\ref{sec4} were tested on three representative differential systems: the van der Pol oscillator, the Burgers’ equation, and the Gray--Scott model. All simulations were carried out in \texttt{MATLAB} on a 64-bit Windows system (Intel\textsuperscript{\textregistered} Core\texttrademark{} i5, 12\textsuperscript{th} generation, 8~GB RAM). The reference solution in all cases was obtained with \texttt{ode15s}. Performance was evaluated through endpoint error norms, convergence plots, and observed orders, with comparisons to the $p=s$ class introduced in \cite{Bras2011}.

For the van der Pol problem, which serves as a classical stiff ODE benchmark, the convergence plot in Figure~\ref{fig1} and endpoint error norms in Table~\ref{table1} confirm that the proposed GLMQS schemes achieve their theoretical order without order reduction. The observed orders, computed using the estimation formula \eqref{OrderEst.}, closely match theoretical predictions, demonstrating the reliability of the schemes on nonlinear stiff dynamics. 

The results of numerical simulation for Burgers' equation~\eqref{Burger} are presented in the form of the effect of diffusion coefficient in the solution norm using GLMQS-4, as reported in Figure~\ref{fig3}. To demonstrate the convergence, error versus step-size plots in the logarithmic sense are reported for $k=1/9$ with $d=1/10$ (Figure~\ref{fig4}) and $d=1/2$ (Figure~\ref{fig5}). The endpoint error norms are compared in Table~\ref{table2} with those obtained by the class given in \cite{Bras2011}. Further, the effect of different diffusion coefficients on the numerical solution for fixed $t$ using our proposed schemes is depicted in Figure~\ref{fig7} (Figure~\ref{fig7(a)}--\ref{fig7(b)}).

The Gray--Scott reaction--diffusion system provides a challenging test of robustness in PDEs with complex dynamics. Convergence and order are validated and are reported in Table \ref{table3} and Figure \ref{fig8}. The proposed integrators of order $p=1,3,$ produce less error than the $p=s$ class of \cite{Bras2011}, and the method of order $p=2,4,$ is competitive for the choice of parameters made while constructing the example method. A different choice of free parameter, along with a suitable minimization, would lead to a more efficient class with the same or better efficiency. 

Several important inferences can be drawn from the results of our numerical simulations
\begin{itemize}
\item It is evident from the results reported in Tables \ref{table1}, \ref{table2}, and \ref{table3} that the endpoint error norm, obtained for all the problems, shows the convergence of our schemes and is competitive with the previous classes of \cite{Bras2011}. It can also be validated from the convergence plots depicted in the Figures \ref{fig1}, \ref{fig4}, \ref{fig5}, and \ref{fig8}.
\item Our proposed classes do not report any order reduction, and it is evident from the tabulated results.
\item Also, the order estimation given in the Figures \ref{fig1}, \ref{fig4}, \ref{fig5} and \ref{fig8} confirms that our proposed classes are almost free from any kind of order reduction. Moreover, our example methods produces small endpoint error norms than the method given in \cite{Bras2011}.
\item The numerical solution at $t=0.5$ for different diffusion coefficients, as illustrated in Figure \ref{fig7} (\ref{fig7(a)}-\ref{fig7(b)}), preserves the parabolic character of the problem. 
\end{itemize}

\section{Conclusion}\label{sec7}
We have explained the construction of implicit GLMQS and presented four classes of $A$-stable and $L$-stable implicit numerical solvers for solving time-dependent stiff differential equations. The proposed classes of GLM are based on Nordsieck input vectors that satisfy the inherent quadratic stability criteria, along with the minimization of the error constant. Example methods of order $p=1,2,3,$ and $4$ are derived, and their governing coefficient matrices are reported in Section \ref{sec4}. We have also provided three numerical illustrations of time-dependent IVPs, both ODEs and PDEs, to test our proposed classes. The numerical results are reported for the van der Pol problem (ODE) and for both PDEs, the Burgers' equation and the Gray-Scott model. These reported results show that our methods are convergent and achieve the expected order of accuracy. In future articles, we will be focusing on the construction of higher-order methods and the development of implicit-explicit numerical schemes using the present classes of implicit schemes for solving time-dependent partitioned differential equations.\\[10pt]

\noindent
\textbf{Funding.}
The authors have received no funding.\\[6pt]
\textbf{Data availability.}
Not applicable.\\[6pt]
\textbf{Declarations.}
The authors report there are no competing interests to declare.\\[6pt]
\textbf{Acknowledgments.}
The authors are grateful to the editor and the anonymous reviewers for their valuable suggestions, which helped to improve the presentation of this paper.\\

\end{document}